\documentclass[a4paper,12pt]{amsart}
\usepackage{amsmath, amsfonts, amssymb}
\usepackage[english]{babel}
\textheight = 230mm \textwidth = 160mm \topmargin = -2mm
\oddsidemargin=6mm \evensidemargin=6mm \makeindex

\newtheorem{thm}{Theorem}[section]

\newtheorem{lm}[thm]{Lemma}

\numberwithin{equation}{section}

\begin{document}

\title{Description of facially symmetric spaces with unitary tripotents}

\author[Kudaybergenov]{Karimbergen Kudaybergenov}
\address{Ch. Abdirov 1, Department of Mathematics, Karakalpak State University, Nukus 230113, Uzbekistan}
\email{karim2006@mail.ru}

\author[Jumabek Seypullaev]{Jumabek Seypullaev} 
\address{V.I.Romanovskiy Institute of
Mathematics, Uzbekistan Academy of Sciences, 81, Mirzo Ulughbek
street, 100170  Tashkent,   Uzbekistan}
\email{jumabek81@mail.ru}
%
%

\date{}
\maketitle

\begin{abstract} 
We give a description of finite-dimensional real
neutral strongly facially symmetric spaces with the property JP. We also
prove that if \(Z\) is a real neutral strongly facially symmetric
with an unitary tripotents, then the space \(Z\) is isometrically isomorphic
to the space \(L_1(\Omega,\Sigma, \mu),\) where \((\Omega,\Sigma,
\mu)\) is a measure space having the direct sum property.

{\it Keywords:} strongly facially symmetric space, norm exposed face, geometric tripotent, geometric Peirce projections
\end{abstract}

\section{Introduction}\label{sec:intro}
A geometric characterization of state spaces of operator algebras is  important problem of the theory of operator algebras
In the middle of 1980s, Friedman and Russo wrote the paper \cite{fr1, fr2} related to
this problem, in which they introduced facially symmetric spaces, largely for the purpose of obtaining
a geometric characterization of the predual spaces of JBW*-triples admitting an algebraic structure.
Many of the properties required in these characterizations are natural assumptions for state spaces of
physical systems. Such spaces are regarded as a geometric model for states of quantum mechanics.
In \cite{fr3} was proved that the preduals of complex von Neumann algebras and, more generally, complex JBW*-triples are
neutral strongly facially symmetric spaces.

The project of classifying facially symmetric spaces was initiated in \cite{fr4}, where a geometric characterization
of complex Hilbert spaces and complex spin factors was given. Moreover, there were described the JBW*-triples of ranks 1 and 2
and Cartan factors of types 1 and 4. Afterwards, Friedman and Russo obtained a
description of atomic facially symmetric spaces \cite{fr5}. Namely, they showed that a neutral strongly facially
symmetric space is linearly isometric to the predual of one of the Cartan factors of types 1-6 provided
that it satisfies four natural physical axioms, which hold for the predual spaces of JBW*-triples.

In paper \cite{fr6}, Neal and Russo found geometric conditions
under which a facially symmetric space is isometric to the predual
of a JBW*-triple. In particular, they proved that any neutral
strongly facially symmetric space decomposes into a direct sum of
atomic and nonatomic strongly facially symmetric spaces. In
\cite{ikts}, a complete description of strongly facially symmetric
spaces that are isometrically isomorphic to a predual space
atom commutative von Neumann algebra.

In \cite[Theorem 3.2]{yik} was established that if $Z$ is a real neutral strongly
facially symmetric space with unitary tripotent such that any maximal tripotent is unitary, then $Z$ is isometrically isomorphic to $L_1$-space.
In the present paper shows that the condition "every maximal
tryptotent is unitary "in the above theorem is superfluous.
We also we give
a description of finite-dimensional real neutral SFS-spaces with the property JP.


\section{Facially symmetric spaces.}

In this section we shall recall some
basic facts and notation about facially symmetric spaces (see for details
\cite{fr1, fr2}).

Let $Z$ be a real or complex normed space.
Elements   $f, g \in Z$ are orthogonal, notation $f\diamond g,$ if $\|f+g\|=\|f-g\|=\|f\|+\|g\|.$
Subset $S,T\subset Z$ are said to be orthogonal, notation $S\diamond T,$ if $f\diamond g$ for all $(f, g)\in S\times T.$
 A norm exposed face of the unit ball $Z_1=\{f\in Z: \|f\|\leq 1\}$ of $Z$ is a non-empty set (necessarilly $\neq Z_1$) of
 the form $F_u=\{f\in Z_1: u(f)=1\},$  where $u\in Z^{*},$ $\|u\|=1.$
 Recall that a face $F$ of a convex set
 $K$ is a non-empty convex subset of $K$ such that if $f\in F$ and $g, h\in K$ satisfy $f=\lambda g+(1-\lambda)h$
 for some $\lambda\in (0, 1),$ then $g, h\in F.$ In particular, an extreme point of $K$ is face of $K.$
 An element $u\in Z^{*}$ is called a projective unit if  $\|u\|=1$  and $\langle u, F^{\diamond}_u\rangle=0.$
Here, for any subset $S,$ $S^{\diamond}$ denotes the set of all elements orthogonal to each elements of $S.$
A norm exposed face
$F_u$ in $Z_1$ is said to be a symmetric face if there is
a linear isometric symmetry $S_u$ of $Z$  onto $Z,$  with $S^{2}_u=I$
 such that fixed point set of $S_u$ is $(\overline{\mathrm{sp}}F_{u})\oplus F_{u}^\diamond.$

A real or complex normed space $Z$  is said weakly facially symmetric (WFS) if every norm exposed face in $Z_1$ is symmetric.

For each symmetric face $F_u$ we defined contractive projections $P_k(F_u), k=0, 1, 2$ on $Z$ as follows.
First, $P_1(F_u)=(I-S_u)/2$ is the projection on the $-1$ eigenspace of $S_u.$ Next we define
$P_2(F_u)$ and $P_0(F_u)$ as the projections of $Z$ onto  $\overline{\mathrm{sp}}F_u$ and  $F^\diamond_u,$ respectively.
A geometric tripotent is a projective unit $u\in Z^{\ast}$ with the property that $F_u$  is a symmetric face and
$S^{\ast}_uu=u$ for a symmetry
 $S_u$ corresponding to  $F_u.$
The projections $P_k(F_u)$ are called geometric Peirce projections.

 By $\mathcal{GU}$  and $\mathcal{SF}$  denote the collections of geometric tripotents and symmetric faces respectively, and the map
 $\mathcal{GU}\ni u\mapsto F_u\in \mathcal{SF}$ is a bijection \cite [Proposition 1.6] {fr2}. For each geometric tripotent $u$ in the dual
 of a WFS space $Z,$ we shall denote the geometric Peirce projections by $P_k(u)=P_k(F_u), k=0, 1, 2.$
 We set $U=Z^{\ast}, Z_k(u)=P_k(u)Z, U_k(u)=P_k(u)^{\ast}Z^{\ast}.$
 The Peirce decomposition
\begin{eqnarray*}
Z=Z_2(u)+Z_1(u)+Z_0(u), U=U_2(u)+U_1(u)+U_0(u).
\end{eqnarray*}
 Tripotents $u$ and $v$ are said to be orthogonal if $u\in U_0(v)$ (which implies $v\in U_0(u)$) or,
equivalently, $u\pm v\in \mathcal{GT}$ (see \cite [Lemma 2.5] {fr1}).
More generally, elements $a$ and $b$ of $U$ are said to be
orthogonal if one of them belongs to $U_2(u)$ and the other belongs to $U_0(u)$ for some geometric tripotent
$u.$

A contractive projection $Q$  on  $Z$ is said to be neutral if $\|Qf\|=\|f\|$ implies $Qf=f$  for each $f\in Z.$
A normed space $Z$ is said to be neutral if, for every symmetric face $F_u,$ the projection $P_2(u)$   is neutral.

 A WFS space $Z$  is strongly facially symmetric (SFS) if for every norm exposed face $F_u$
 in $Z_1$ and every $y\in Z^{*}$  with $\|y\|=1$ and  $F_u\subset F_y,$
 we have $S^{*}_uy=y,$  where $S_u$ denotes a symmetry associated with $F_u.$

 The principal examples of neutral complex strongly facially symmetric
spaces are preduals of complex JBW*-triples, in particular, the preduals of complex
von Neumann algebras (see~\cite{fr3}).
In a neutral strongly facially symmetric space $Z,$ every non-zero element
has a polar decomposition ~\cite[Theorem 4.3]{fr2}: for non-zero $f\in Z$ there exists
a unique geometric tripotent $v=v_f$ with $\langle v, f\rangle=\|f\|$ and $\langle v, f^{\diamond}\rangle = 0.$ If
$f, g\in Z,$ then $f\diamond g$ if and only if $v_f\diamond v_g,$ as follows from \cite [Corollary
1.3(b) and Lemma 2.1]{fr1}.
A partial ordering can be defined on the set of geometric tripotents as
follows: if $u, v \in \mathcal{GT},$ then $u\leq v$ if $F_u\subset F_v,$ or equivalently, by ~\cite[Lemma
4.2]{fr2}, $P_2(u)^{\ast}v = u,$ or $v-u$ is either zero or a geometric tripotent orthogonal
to $u.$ As is known ~\cite[Proposition 4.5]{fr2}, the set
$L_e=\{v\in \mathcal{GT}: v\leq e\}\cup \{0\}$ is a complete orthomodular lattice with orthocomplement
$u^{\perp}=e-u,$ with respect to the order $"\leq".$
The convex hull of the points $x_1, x_2, ..., x_{n+1}$  in general position is called an $n$-dimensional simplex.
In a finite-dimensional neutral SFS-space Z, every  norm exposed face is a simplex (see \cite[Proposition 1]{is3}).

\section{Description of finite-dimensional real strongly facially symmetric spaces}

One of the important concepts in SFS-spaces is the notion of rank
of space, which was  introduced in ~\cite{fr5}. In~\cite{is1} was
given a description  of a unit ball of an $n$-dimensional real
SFS-space of rank $n.$ In~\cite{s}  was obtained a
description of the unit ball of a reflexive SFS-space of rank 1.
In addition, in the  ~\cite{is3} was given a description of a
unit ball of an $n$-dimensional real neutral SFS-space of rank $n-1.$ The problem  of characterizing SFS-spaces with respect
to rank remains is  still open. In this section we give a
description of finite-dimensional real neutral SFS-spaces with the
property JP.

A facially symmetric space $Z$ is of rank $n (n = 1,2,...),$ notation $\mathrm{rank}  Z=n,$ if every
orthogonal family of geometric tripotents has cardinality at most $n,$ and if there is at least
one orthogonal family of geometric tripotents containing exactly $n$ elements  (see~\cite{fr5}).

A WFS space $Z$ satisfies JP (joint Peirce decomposition) if for any pair $u, v$ of orthogonal
geometric tripotents, we have $S_{u}S_{v}=S_{u+v}$
where for any geometric tripotent $w,$ $S_w$ is the symmetry associated with the
symmetric face $F_w$ (see~\cite{fr5}).

The space $\mathbb{R}^{n}$ with the norm
\begin{eqnarray*}
\|f\|=\sum\limits_{i=1}^{k}\sqrt{\sum\limits_{j=n_i}^{n_{i+1}}f_{j}^{2}},
\end{eqnarray*}
where $1=n_1<n_2< \ldots< n_{k+1}=n,$ is a SFS-space of rank $k$
and satisfies JP.

The following theorem gives a description of finite-dimensional
real neutral SFS-spaces with the property JP.

\begin{thm}\label{leibnizsimple}
Let $ Z $ be a finite-dimensional real neutral
strongly facially symmetric space with property JP.
Then
\begin{eqnarray*}
Z\cong \bigoplus\limits_{i=1}^{k} H_i,
\end{eqnarray*}
where $H_i$ is a Hilbert space, $i\in \overline{1,k}$ and $k=\mathrm{rank} Z.$
\end{thm}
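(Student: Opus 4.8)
The plan is to realize the rank $k$ as the number of summands, to produce each summand as a rank-one sub-space (hence a Hilbert space by the known rank-one classification), and to use property JP together with the simplex structure of faces to force these summands to be mutually orthogonal, so that $Z$ is their $\ell_1$-sum. First I would fix a maximal orthogonal family of minimal geometric tripotents $u_1,\dots,u_k\in\mathcal{GT}$ realizing $\mathrm{rank}\,Z=k$; its members are minimal, since a strictly smaller tripotent together with its complement inside $u_i$ and the remaining $u_j$ would give an orthogonal family of size $k+1$. By JP the sum $e=u_1+\dots+u_k$ is a geometric tripotent, and in finite dimensions $e$ may be taken maximal, so $Z_0(e)=0$ and $Z=Z_2(e)\oplus Z_1(e)$. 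Again by JP the symmetries $S_{u_i}$ commute, hence so do the projections $P_a(u_i)$, and $Z$ carries the associated joint Peirce decomposition; in particular the subspaces $H_i:=Z_0(e-u_i)=\bigcap_{j\ne i}Z_0(u_j)$ are well defined.

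The heart of the argument is the vanishing of the off-diagonal joint Peirce spaces. For $u_i\perp u_j$, property JP gives $S_{u_i}S_{u_j}=S_{u_i+u_j}$, so any $x\in Z_1(u_i)\cap Z_1(u_j)$ is fixed by $S_{u_i+u_j}$ and lies in $Z_2(u_i+u_j)\oplus Z_0(u_i+u_j)$. Here the simplex property enters: since $u_i,u_j$ are orthogonal minimal tripotents, $F_{u_i+u_j}$ is a $1$-simplex with vertices $u_i,u_j$, whence $Z_2(u_i+u_j)=\overline{\mathrm{sp}}F_{u_i+u_j}=\mathbb{R}u_i\oplus\mathbb{R}u_j=Z_2(u_i)\oplus Z_2(u_j)$, a space on which $u_i$ acts with Peirce grade $0$ or $2$; as $Z_0(u_i+u_j)=Z_0(u_i)\cap Z_0(u_j)$ has $u_i$-grade $0$, the grade-$1$ element $x$ must vanish. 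Thus $Z_1(u_i)\cap Z_1(u_j)=0$, and feeding this back through the joint decomposition gives $Z_1(u_i)\subseteq\bigcap_{j\ne i}Z_0(u_j)$ and $H_i=Z_2(u_i)\oplus Z_1(u_i)$. The Peirce splitting $Z=(Z_2\oplus Z_1)(u_1)\oplus Z_0(u_1)=H_1\oplus Z_0(u_1)$, combined with induction on the rank applied to $Z_0(u_1)$ (a neutral strongly facially symmetric space satisfying JP, of rank $k-1$), yields the linear direct sum $Z=\bigoplus_{i=1}^k H_i$.

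It remains to get the $\ell_1$-norm and to identify each block. For block orthogonality, note that for $h_i\in H_i$ the support tripotent $v_{h_i}$ lies in the sub-space $H_i=Z_0(e-u_i)$, hence is a minimal tripotent orthogonal to each $u_j$ with $j\ne i$. Applying the off-diagonal vanishing above to the orthogonal minimal pair $(v_{h_i},u_j)$ gives $Z_1(u_j)\subseteq Z_0(v_{h_i})$, i.e. $Z_1(u_j)\diamond v_{h_i}$; since also $v_{h_i}\diamond u_j$ and $\{v_{h_i}\}^{\diamond}=Z_0(v_{h_i})$ is a linear subspace containing both the $Z_2(u_j)$- and $Z_1(u_j)$-components of any $h_j\in H_j=Z_2(u_j)\oplus Z_1(u_j)$, we get $v_{h_i}\diamond h_j$, that is $h_i\diamond h_j$. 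Pairwise orthogonality of the finitely many blocks forces $\|\sum_i h_i\|=\sum_i\|h_i\|$, so $Z=\bigoplus_{i=1}^k H_i$ isometrically as an $\ell_1$-sum. Finally each $H_i=Z_0(e-u_i)$ has rank $1$, because any orthogonal family in it, together with $\{u_j:j\ne i\}$, is orthogonal in $Z$ and so has at most one member; invoking the classification of finite-dimensional real neutral strongly facially symmetric spaces of rank $1$ as real Hilbert spaces completes the proof.

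I expect the principal obstacle to be the off-diagonal vanishing, because neither hypothesis suffices alone: the complex matrix preduals satisfy the analogue of JP yet have non-trivial off-diagonal Peirce spaces, precisely because their faces are balls rather than simplices. The argument must therefore couple the algebraic identity $S_{u_i}S_{u_j}=S_{u_i+u_j}$ with the real finite-dimensional simplex geometry, and then bootstrap it through the support tripotents to upgrade the easy relation $H_i\diamond u_j$ to full block orthogonality $H_i\diamond H_j$; the rank-one classification is used only as an external input.
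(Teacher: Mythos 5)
Your proposal is correct and follows essentially the same route as the paper: both arguments use JP plus the finite-dimensional simplex structure of faces to kill the mixed Peirce-1 spaces (your $Z_1(u_i)\cap Z_1(u_j)=0$ is the paper's $P_1(u_i)P_1(u_j)=0$, which the paper gets from $P_2(u+v)=P_2(u)+P_2(v)+P_1(u)P_1(v)$ of \cite[Remark 4.2]{fr5} combined with $F_{u+v}=F_u\oplus_c F_v$), then show via support tripotents that $Z_2(u_1)+Z_1(u_1)=Z_0(u_2+\dots+u_k)$ is orthogonal to $Z_0(u_1)$, split off that block, and induct. The only differences are cosmetic (induction on rank rather than dimension, and explicit appeal to the rank-one classification where the paper bottoms out at dimensions $2$ and $3$ via \cite{is3}).
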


Recall that a face $F$ of a convex set $K$ is called a split face if there
exists a face $G,$ called complementary to $F,$ such that $F \cap G = \emptyset$ and $K$
is the direct convex sum $F\oplus_c G,$ i.e. any element $f\in K$ can be uniquely
represented in the form $f= tg +(1-t)h,$ where $t\in[0; 1]$ $f\in F,$ $h\in G.$

In this section we assume that $Z$ is a finite-dimensional real neutral SFS-space with the property JP.

For the proof we need several lemmas.

\begin{lm}\label{lm0}
If $u$ and $v$ are orthogonal geometric tripotents, then

(i) $F_{u+v}=F_u\oplus_c F_{v};$

(ii) $P_2(u+v)=P_2(u)+P_2(v);$

(iii) $P_1(u)P_1(v)=0.$
\end{lm}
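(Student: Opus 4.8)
My plan is to treat the three parts in the order (iii), (ii), (i), since each rests on the previous one. Throughout I use that $u\perp v$ gives $u\pm v\in\mathcal{GT}$, that $u,v\le u+v$ (because $(u+v)-u=v$ and $(u+v)-v=u$ are geometric tripotents orthogonal to $u$ and $v$), and hence $F_u,F_v\subset F_{u+v}$. From $F_u\diamond F_v$ I record the Peirce inclusions $Z_2(u)=\overline{\mathrm{sp}}F_u\subset F_v^\diamond=Z_0(v)$ and $Z_2(v)\subset Z_0(u)$, together with the orthogonality of subspaces $Z_0(w)\diamond Z_2(w)$ and the fact $F_u\cap F_v=\emptyset$ (a common point $x$ would give $\langle u+v,x\rangle=2>\|u+v\|\,\|x\|$).

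For (iii) I first invoke property JP: $S_uS_v=S_{u+v}=S_{v+u}=S_vS_u$, so the two symmetries commute. Since $P_1(u)=(I-S_u)/2$ and $P_1(v)=(I-S_v)/2$, the projections $P_1(u),P_1(v)$ commute, and therefore $Q:=P_1(u)P_1(v)=P_1(v)P_1(u)$ is idempotent with range $W:=Z_1(u)\cap Z_1(v)$. It remains to prove $W=0$, which is the heart of the lemma. Take $w\in W$ with $\|w\|=1$ and let $p=v_w$ be its polar support, so $\langle p,w\rangle=1$ and $\langle p,w^\diamond\rangle=0$. Because $S_u$ is an isometry with $S_uw=-w$, it maps $w^\diamond$ onto $(-w)^\diamond=w^\diamond$, so the geometric tripotent $-S_u^\ast p$ satisfies both polar conditions for $w$; uniqueness of the polar decomposition forces $S_u^\ast p=-p$, i.e. $p\in U_1(u)$, and likewise $p\in U_1(v)$. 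Dualizing JP gives $S_{u+v}^\ast p=S_v^\ast S_u^\ast p=p$, so $p\in U_2(u+v)\oplus U_0(u+v)$ while $w\in Z_2(u+v)\oplus Z_0(u+v)$. The conclusion $W=0$ now follows from the simplex structure of $F_{u+v}$: a geometric tripotent lying simultaneously in the Peirce-one spaces of the orthogonal pair $u,v$ cannot support a nonzero element compatible with the splitting of $F_{u+v}$ into the disjoint faces $F_u,F_v$. Making this incompatibility precise (equivalently, $Z_1(u)\cap Z_1(v)=0$) is the main obstacle.

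Granting (iii), part (ii) is Peirce bookkeeping. Expanding $S_{u+v}=S_uS_v$ via $S_w=I-2P_1(w)$ yields $P_1(u+v)=P_1(u)+P_1(v)-2P_1(u)P_1(v)=P_1(u)+P_1(v)$ by (iii). Next I identify the zero part: $Z_0(u+v)=Z_0(u)\cap Z_0(v)$, since an element's polar support is orthogonal to $u+v$ exactly when it is orthogonal to both $u$ and $v$ (as $U_0(v_f)$ is a subspace). Using $Z_2(u)\subset Z_0(v)$, $Z_2(v)\subset Z_0(u)$ and (iii) one checks that the mixed products $P_2(u)P_2(v)$, $P_2(u)P_1(v)$, $P_1(u)P_2(v)$ all vanish, whence $P_0(u)+P_0(v)-I$ is the projection onto $Z_0(u)\cap Z_0(v)=Z_0(u+v)$. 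Subtracting the two identities $P_1(u+v)=P_1(u)+P_1(v)$ and $P_0(u+v)=P_0(u)+P_0(v)-I$ from the identity $I$ gives $P_2(u+v)=I-P_1(u+v)-P_0(u+v)=P_2(u)+P_2(v)$.

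Finally, for (i), let $f\in F_{u+v}$. Since $f\in Z_2(u+v)$, part (ii) gives $f=P_2(u)f+P_2(v)f=:g'+h'$ with $g'\in Z_2(u)\subset Z_0(v)$ and $h'\in Z_2(v)$. Pairing against $u\pm v\in\mathcal{GT}$, each of norm one, shows $t:=\langle u,f\rangle=\langle u,g'\rangle$ and $1-t=\langle v,f\rangle=\langle v,h'\rangle$ lie in $[0,1]$. As $g'\in Z_0(v)$ and $h'\in Z_2(v)$ we have $g'\diamond h'$, so $\|f\|=\|g'\|+\|h'\|$; combined with $\|g'\|\ge t$, $\|h'\|\ge 1-t$ and $\|f\|\le1$ this forces $\|g'\|=t$ and $\|h'\|=1-t$. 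Hence (for $0<t<1$; the cases $t\in\{0,1\}$ being immediate) $g:=g'/t\in F_u$ and $h:=h'/(1-t)\in F_v$ satisfy $f=tg+(1-t)h$. Uniqueness follows by applying $\langle u,\cdot\rangle$ to recover $t$ and then $P_2(u)$ to recover $g$, using $P_2(u)h=0$. This is precisely the direct convex sum $F_{u+v}=F_u\oplus_c F_v$.
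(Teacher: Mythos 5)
Your proposal inverts the paper's logical order, deriving everything from part (iii), and it is precisely there that it stalls. You reduce (iii) to showing $Z_1(u)\cap Z_1(v)=\{0\}$, push the polar support of a putative $w\in Z_1(u)\cap Z_1(v)$ into $U_1(u)\cap U_1(v)$, and then assert that the conclusion ``follows from the simplex structure of $F_{u+v}$,'' conceding that making this precise ``is the main obstacle.'' That concession is the whole lemma: no formal manipulation of the symmetries and the JP identity can yield (iii), because in the general complex JBW*-triple setting the joint Peirce $1$--$1$ space of two orthogonal tripotents is typically nonzero (for the orthogonal tripotents $e_{11},e_{22}$ in $M_2$, the off-diagonal entries lie in $Z_1(e_{11})\cap Z_1(e_{22})$, and $S_{e_{11}}^{*}p=-p$, $S_{e_{22}}^{*}p=-p$ produces no contradiction, exactly as you found). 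The vanishing of $P_1(u)P_1(v)$ is a consequence of the standing hypothesis that $Z$ is a \emph{finite-dimensional real} neutral SFS space, and your argument never brings that hypothesis to bear on the decisive step; since (ii) and (i) in your scheme rest on (iii), the whole proof is left open.

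The paper sidesteps the obstacle by proving the parts in the opposite order. It first establishes (i) directly: since every norm exposed face of a finite-dimensional neutral SFS space is a simplex (\cite[Proposition 1]{is3}), the orthogonality $F_u\diamond F_v$ makes $F_u\oplus_c F_v$ a symmetric face contained in $F_{u+v}$; the dichotomy of \cite[Lemma 2.7]{fr1} then gives either equality or a nonzero $f\in(F_u\oplus_c F_v)^{\diamond}\cap F_{u+v}$, and the latter is impossible because $F_u^{\diamond}\cap F_v^{\diamond}=F_{u+v}^{\diamond}$ by \cite[Lemma 1.8]{fr1}. Part (ii) then follows from (i) by taking linear spans in finite dimensions, and (iii) drops out by comparing (ii) with the JP formula $P_2(u+v)=P_2(u)+P_2(v)+P_1(u)P_1(v)$ of \cite[Remark 4.2]{fr5}. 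If you wish to keep your architecture you would still have to prove the face decomposition (i) first in some form; your derivations of (ii) and (i) conditional on (iii) are essentially sound (the norm-splitting argument $\|f\|=\|g'\|+\|h'\|$ in (i) is a nice touch), but as written the proposal has a genuine gap at its foundation.
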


\begin{proof} (i) Let $u\diamond v,$  $u, v\in \mathcal{GT}.$
Then, by \cite[lemma 2.5]{fr1} we get that $F_u\diamond F_v.$ Therefore, taking into account that in a real finite-dimensional SFS-space
 $Z$ every norm exposed face
is a simplex (see \cite[proposition 1]{is3}), then
$F_u\oplus_c F_{v}$  is a symmetric
face. Moreover, by \cite[lemma 2.1]{fr1} it follows that $F_u, F_v\subset F_{u+v}$
and therefore $F_u\oplus_c F_{v}\subset F_{u+v}.$
 Then by \cite[lemma 2.7]{fr1}  it follows that $F_u\oplus_c F_{v}=F_{u+v},$
 or $(F_u\oplus_c F_{v})^{\diamond}\cap F_{u+v}\neq \emptyset.$

 Suppose that $0\neq f\in (F_u\oplus_c F_{v})^{\diamond}\cap F_{u+v}.$ Then
 $f\in F_u^{\diamond}$ and $f\in F_{v}^{\diamond}.$ Therefore $f\in F_u^{\diamond}\cap F_{v}^{\diamond}.$
 Then, by \cite[lemma 1.8]{fr1} it follows that $f\in F_u^{\diamond}\cap F_{v}^{\diamond}=F_{u+v}^{\diamond}.$
  It
contradicts the fact that $f\in F_{u+v}.$ Hence, $F_{u+v}=F_u\oplus_c F_{v}.$

(ii)
Since $ Z $ is a finite-dimensional SFS-space, then from (i) follows  (ii).

 (iii) Let $u\diamond v,$  $u, v\in \mathcal{GT}.$ Since $Z$ has the property of JP, then by \cite[Remark 4.2]{fr5} it follows that
\begin{eqnarray*}
P_2(u+v)=P_2(u)+P_2(v)+P_1(u)P_1(v).
\end{eqnarray*}
On the other hand, from (ii)
we have
\begin{eqnarray*}
P_2(u+v)=P_2(u)+P_2(v).
\end{eqnarray*}
This means that $P_1(u)P_1(v)=0.$ The proof is complete.
\end{proof}

\begin{lm}\label{spi}
For every $u\in \mathcal{GT},$ we have
\begin{eqnarray*}
Z_0(u) \diamond (Z_1(u)+Z_2(u)).
\end{eqnarray*}
\end{lm}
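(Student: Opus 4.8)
The plan is to reduce the statement to the basic Peirce orthogonality $Z_0(w)\diamond Z_2(w)$, valid for an arbitrary geometric tripotent $w$, and then to localise supports. Fix $f\in Z_0(u)$ and an arbitrary $g\in Z_1(u)+Z_2(u)$; by the polar decomposition \cite[Theorem 4.3]{fr2} together with the equivalence $f\diamond g\Leftrightarrow v_f\diamond v_g$ (from \cite[Corollary 1.3(b), Lemma 2.1]{fr1}), it suffices to prove $v_f\diamond v_g$. Writing $v=v_f$, I will show that $g\in Z_0(v)$, while of course $f\in Z_2(v)$, so that the desired $f\diamond g$ follows from $Z_0(v)\diamond Z_2(v)$.

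First I record two localisation facts. Since $f\in Z_0(u)=F_u^\diamond$ and $F_u$ is a simplex whose barycentre has support $u$ (by Lemma~\ref{lm0}(i)), the equivalence above yields $v\diamond u$. By \cite[Lemma 2.5]{fr1} this gives $F_v\diamond F_u$, whence $F_v\subseteq F_u^\diamond=Z_0(u)$ and $F_u\subseteq F_v^\diamond=Z_0(v)$; taking closed linear spans,
\begin{eqnarray*}
Z_2(v)\subseteq Z_0(u),\qquad Z_2(u)\subseteq Z_0(v).
\end{eqnarray*}
The basic orthogonality $Z_0(w)\diamond Z_2(w)$ for any tripotent $w$ follows from the support description: an element of $Z_0(w)$ has support orthogonal to $w$, i.e. in $U_0(w)$, while an element of $Z_2(w)$ has support $\le w$, i.e. in $U_2(w)$ (by \cite[Lemma 4.2]{fr2}), and two tripotents lying respectively in $U_0(w)$ and $U_2(w)$ are orthogonal by definition.

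It remains to prove $g\in Z_0(v)$. Since $g\in Z_1(u)+Z_2(u)$ we have $P_0(u)g=0$, so $g=P_2(u)g+P_1(u)g$. The first summand lies in $Z_2(u)\subseteq Z_0(v)$, hence is annihilated by both $P_1(v)$ and $P_2(v)$. For the second, $P_1(u)g\in Z_1(u)$, and $P_1(v)P_1(u)=0$ by Lemma~\ref{lm0}(iii), so $P_1(v)g=0$; thus it only remains to check $P_2(v)\,P_1(u)g=0$. Here the property JP enters decisively. Because $u\diamond v$, the symmetries commute, $S_uS_v=S_{u+v}=S_vS_u$, so $S_u$ preserves the eigenspaces of $S_v$, in particular $Z_1(v)$; moreover $S_u$ fixes $Z_2(v)\subseteq Z_0(u)$ pointwise and preserves $Z_0(v)=F_v^\diamond$ because it is an isometry fixing $F_v$ pointwise. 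Hence $S_u$ commutes with $P_2(v)$. Consequently $P_2(v)$ maps the $(-1)$-eigenspace $Z_1(u)$ of $S_u$ into itself; but its image also lies in $Z_2(v)\subseteq Z_0(u)$, which is contained in the $(+1)$-eigenspace of $S_u$. As $Z_1(u)\cap Z_0(u)=0$, we conclude $P_2(v)\,P_1(u)g=0$, so $P_2(v)g=0$ and $g\in Z_0(v)$, which finishes the argument.

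The main obstacle is precisely this last step $P_2(v)\,P_1(u)g=0$: the Peirce-$1$ space is exactly the part that need not split off in a general neutral SFS-space (for instance in the predual of $M_2(\mathbb{C})$), and it is the property JP — through the commuting of the symmetries and the vanishing $P_1(u)P_1(v)=0$ of Lemma~\ref{lm0}(iii) — that forces $Z_1(u)$ to be orthogonal to $Z_0(u)$ as well. The two localisation facts and the basic orthogonality $Z_0(w)\diamond Z_2(w)$ are routine consequences of the polar decomposition and the order characterisation of \cite[Lemma 4.2]{fr2}, and I expect them to be quotable with little extra work.
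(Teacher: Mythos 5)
Your argument is correct and is essentially the paper's own proof run in mirror image: the paper Peirce--decomposes the support tripotent $v_f$ of a Peirce-$1$ element against the support $v_g$ of a Peirce-$0$ element inside $U$, killing the $P_2(v_g)^{*}$ and $P_1(v_g)^{*}$ components via \cite[Corollary 3.4]{fr2} and Lemma~\ref{lm0}(iii), while you decompose $g\in Z_1(u)+Z_2(u)$ against $v_f$ inside $Z$ and kill the same two components (re-deriving the relation $P_2(v)P_1(u)=0$ from JP and commuting symmetries instead of quoting \cite[Corollary 3.4]{fr2}, and treating $Z_1(u)+Z_2(u)$ in one step where the paper handles $Z_1(u)$ and $Z_2(u)$ separately and then combines them by \cite[Proposition 1.1]{fr2}). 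The one imprecise point is your appeal to Lemma~\ref{lm0}(i) for ``$v_f\diamond u$ whenever $f\in Z_0(u)$'' --- that lemma does not assert this, but the fact itself is exactly \cite[Corollary 2.2]{fr4}, which the paper invokes at the same juncture, so no real gap remains.
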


\begin{proof}  Let $f\in Z_1(u)$ and $g\in Z_0(u),$ then by
\cite[Corollary 2.2]{fr4} it follows that $v_f\in U_1(u)$ and $v_g\in U_0(u).$
Then by \cite[Corollary 3.4]{fr2} it follows that  $P_2(v_g)^{*}P_1(u)^{*}=0,$
and by lemma 1 we have $P_1(v_g)^{*}P_1(u)^{*}=0.$ Using these relations, we obtain
\begin{eqnarray*}
v_f=P_1(u)^{*}v_f=[P_2(v_g)^{*}+P_1(v_g)^{*}+P_0(v_g)^{*}]P_1(u)^{*}v_f =\\
=P_2(v_g)^{*}P_1(u)^{*}v_f+P_1(v_g)^{*}P_1(u)^{*}v_f+P_0(v_g)^{*}P_1(u)^{*}v_f=\\
=P_0(v_g)^{*}P_1(u)^{*}v_f=P_0(v_g)^{*}v_f.
\end{eqnarray*}
Hence $v_f\in U_0(v_g),$ and therefore, by the orthogonality of geometric tripotents, we have $v_f\diamond v_g.$
 Hence, $f\diamond g.$ This means that  $Z_1(u) \diamond Z_0(u).$
  In addition, by  \cite[proposition 1.5]{fr2} it follows that $Z_2(u)\diamond Z_0(u).$ Therefore, by  \cite[proposition 1.1]{fr2}
$Z_0(u) \diamond (Z_1(u)+Z_2(u)).$
 The proof is complete.
\end{proof}

A geometric tripotent $u$ is called maximal if  $v\in \mathcal{GT}$ from $u\leq v$ implies $v=u$
or is equivalent to $P_0(u)=0$.

\begin{lm}\label{lm1}
Let $u, v\in \mathcal{GT}$ and $u\diamond v.$
If $u+v$ is maximal, then

(i) $P_0(u)P_0(v)=0;$

(ii) $Z_0(u)=Z_2(v)+Z_1(v);$

(iii) $Z=Z_0(u) \oplus Z_0(v).$
\end{lm}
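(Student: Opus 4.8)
The plan is to convert the three assertions into identities among the geometric Peirce projections and then read them off. The statements are tightly linked: once I establish the operator identity $P_0(u)=P_2(v)+P_1(v)$ (which is exactly (ii) at the level of ranges), the symmetric identity $P_0(v)=P_2(u)+P_1(u)$ follows by interchanging $u$ and $v$ (note that $v\diamond u$ and that $v+u=u+v$ is still maximal), and then both (iii) and (i) become immediate consequences of the Peirce decomposition $Z=Z_2(u)+Z_1(u)+Z_0(u)$, which is a direct sum of the ranges of the complementary projections $P_2(u), P_1(u), P_0(u)$.

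The key computation uses property JP. Since $u\diamond v$, JP gives $S_{u+v}=S_uS_v$. Writing each symmetry as $S_w=I-2P_1(w)$ and using $P_1(u)P_1(v)=0$ from Lemma \ref{lm0}(iii), I expand
\begin{eqnarray*}
I-2P_1(u+v)=S_uS_v=(I-2P_1(u))(I-2P_1(v))=I-2P_1(u)-2P_1(v),
\end{eqnarray*}
so that $P_1(u+v)=P_1(u)+P_1(v)$. Combining this with Lemma \ref{lm0}(ii), namely $P_2(u+v)=P_2(u)+P_2(v)$, and with the maximality of $u+v$, which means $P_0(u+v)=0$, I obtain
\begin{eqnarray*}
I=P_2(u+v)+P_1(u+v)=P_2(u)+P_2(v)+P_1(u)+P_1(v).
\end{eqnarray*}
Subtracting the Peirce identity $I=P_2(u)+P_1(u)+P_0(u)$ yields $P_0(u)=P_2(v)+P_1(v)$, hence $Z_0(u)=Z_2(v)+Z_1(v)$, which is (ii). Interchanging the roles of $u$ and $v$ gives $P_0(v)=P_2(u)+P_1(u)$ and $Z_0(v)=Z_2(u)+Z_1(u)$.

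For (iii), I use that $Z=Z_2(u)\oplus Z_1(u)\oplus Z_0(u)$ is a direct sum of the Peirce spaces. Since $Z_2(u)+Z_1(u)=Z_0(v)$ while $Z_0(u)$ is the remaining summand, their sum is all of $Z$ and their intersection is zero, so $Z=Z_0(u)\oplus Z_0(v)$. For (i), the identities just obtained show that $P_0(u)$ is the projection onto $Z_0(u)$ with kernel $Z_2(u)+Z_1(u)=Z_0(v)$, whereas $\mathrm{ran}\,P_0(v)=Z_0(v)$; therefore $P_0(u)P_0(v)=0$, since $P_0(u)$ annihilates $Z_0(v)$. I expect the only delicate point to be the bookkeeping in the JP symmetry computation — making sure the cross term genuinely vanishes via Lemma \ref{lm0}(iii) and that maximality is invoked in the form $P_0(u+v)=0$ — after which (i) and (iii) are formal consequences of the direct-sum structure of the Peirce decomposition.
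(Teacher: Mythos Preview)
Your proof is correct but takes a different route from the paper's. The paper proves (i) first by quoting \cite[Lemma~1.8]{fr2}, which gives $P_0(u)P_0(v)=P_0(u+v)$, and then invoking maximality. For (ii) it proceeds by element-chasing: for $f\in Z_0(u)$ it decomposes $f$ via the Peirce projections of $v$ and uses (i) to kill the $P_0(v)$-term; for the reverse inclusion it decomposes $f\in Z_2(v)+Z_1(v)$ via the Peirce projections of $u$ and uses several relations from \cite[Corollary~3.4]{fr2} together with $P_1(u)P_1(v)=0$ to show that only the $P_0(u)$-term survives. Part (iii) is then obtained from (ii) and the orthogonality in Lemma~\ref{spi}.

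Your approach instead goes straight for the operator identity $P_0(u)=P_2(v)+P_1(v)$ by expanding $S_{u+v}=S_uS_v$ in terms of $P_1$, using Lemma~\ref{lm0}(iii) to kill the cross term, adding Lemma~\ref{lm0}(ii), and subtracting the Peirce identity for $u$. This is cleaner: it yields the projection equality (stronger than the range equality in (ii)), immediately gives the symmetric identity $P_0(v)=P_2(u)+P_1(u)$, and from $P_0(u)=I-P_0(v)$ both (i) and (iii) drop out without any appeal to \cite[Lemma~1.8]{fr2} or \cite[Corollary~3.4]{fr2}. The paper's approach, by contrast, isolates (i) via a single citation independent of Lemma~\ref{lm0}(ii) and the JP expansion, and its proof of (iii) also records (via Lemma~\ref{spi}) that the summands are $\diamond$-orthogonal, a fact used later in the proof of Theorem~\ref{leibnizsimple}.
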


\begin{proof} (i) If $u$ and $v$ are mutually orthogonal
geometric tripotents, it follows from \cite[lemma 1.8]{fr2} that $P_0(u)P_0(v)=P_0(u+v).$
Therefore, from the maximality $u+v$ we have $P_0(u)P_0(v)=P_0(u+v)=0.$

(ii) Let $f\in Z_0(u).$ Then by \cite[Corollary 3.4]{fr2},
\begin{eqnarray*}
f&=&[P_2(v)+P_1(v)+P_0(v)]P_0(u)f=\\
&=&P_2(v)P_0(u)f+P_1(v)P_0(u)f+P_0(v)P_0(u)f=\\
&=&P_2(v)f+P_1(v)f.
\end{eqnarray*}
Thus $f\in (Z_2(v)+Z_1(v)).$

Let $f\in Z_2(v)+Z_1(v).$ Since $P_1(u)P_1(v)=0$ and $P_0(u)P_0(v)=0,$
then by  \cite[Corollary 3.4 (a)]{fr2} we have that
\begin{eqnarray*}
f&=&[P_2(u)+P_0(u)+P_1(u)](P_2(v)f+P_1(v)f)=\\
&=&P_2(u)P_2(v)f+P_0(u)P_2(v)f+P_1(u)P_2(v)f+\\
&+&P_2(u)P_1(v)f+P_0(u)P_1(v)f+P_1(u)P_1(v)f=\\
&=&P_0(u)P_2(v)f+P_0(u)P_1(v)f=P_0(u)[P_2(v)+P_1(v)]f=P_0(u)f.
\end{eqnarray*}
Therefore, $f\in Z_0(u).$ Hence, $Z_0(u)=Z_2(v)+Z_1(v).$

(iii) Since $(Z_2(v)+Z_1(v))\diamond Z_0(v)$ and $Z_0(u)=Z_2(v)+Z_1(v),$ then the Pierce expansion can be rewritten as
$Z=Z_0(u) \oplus Z_0(v).$
The proof is complete.
\end{proof}

\begin{lm}\label{lm1}
Let $Z$ be a neutral strongly facially symmetric space with the property JP.
Then for every $u\in \mathcal{GT},$ the subspace $Z_0(u)$ has the property JP.
\end{lm}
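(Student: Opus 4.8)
The plan is to show that the strongly facially symmetric structure of $Z_0(u)$ is inherited from $Z$ by restriction, and then to obtain JP for $Z_0(u)$ simply by restricting the JP identity of $Z$ to the invariant subspace $Z_0(u)$. Thus the task splits into two parts: identifying the geometric tripotents of $Z_0(u)$, and showing that the symmetry of $Z_0(u)$ attached to such a tripotent is the restriction of the corresponding ambient symmetry. For a geometric tripotent $w$ of $Z_0(u)$ I shall write $S_w^0$ for its symmetry in $Z_0(u)$ and $P_k^0(w)$ for the corresponding geometric Peirce projections.

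First I would identify $\mathcal{GT}(Z_0(u))$ with $\{w\in\mathcal{GT}(Z):w\diamond u\}$. Given a nonzero $f\in Z_0(u),$ the polar decomposition in a neutral SFS space (\cite[Theorem 4.3]{fr2}) produces $v_f\in\mathcal{GT}(Z)$ with $\langle v_f,f\rangle=\|f\|;$ choosing any $g\in F_u\subset Z_2(u)$ we have $v_g=u,$ while Lemma~\ref{spi} gives $f\diamond g,$ so $v_f\diamond v_g=u.$ Hence every geometric tripotent arising inside $Z_0(u)$ is orthogonal to $u,$ and conversely each $w\diamond u$ restricts to a projective unit of $Z_0(u)$ with exposed face $F_w\cap Z_0(u).$ Since the orthogonality relation $\diamond$ is intrinsic to the isometrically embedded subspace $Z_0(u),$ orthogonality of two such tripotents in $Z_0(u)$ coincides with their orthogonality in $Z.$

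Next I would show that for $w\diamond u$ each Peirce projection $P_k(w)$ leaves $Z_0(u)$ invariant, so that $S_w=I-2P_1(w)$ restricts to the symmetry $S_w^0.$ For orthogonal $u,w$ the Peirce multiplication rules \cite[Corollary 3.4]{fr2} give $P_2(u)P_2(w)=P_2(u)P_1(w)=P_1(u)P_2(w)=0,$ and property JP yields $P_1(u)P_1(w)=0$ by Lemma~\ref{lm0}(iii). Consequently, for every $g\in Z$ one has $P_2(w)g=P_0(u)P_2(w)g$ and $P_1(w)g=P_0(u)P_1(w)g,$ i.e. $Z_2(w)+Z_1(w)\subset Z_0(u);$ in particular $P_1(w),$ $P_2(w)$ and hence $P_0(w)$ all map $Z_0(u)$ into itself. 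Therefore $S_w$ restricts to an isometric involution of $Z_0(u)$ whose fixed-point set is $Z_2(w)\oplus(Z_0(w)\cap Z_0(u))=\overline{\mathrm{sp}}(F_w\cap Z_0(u))\oplus(F_w\cap Z_0(u))^{\diamond},$ the orthocomplement being taken in $Z_0(u);$ this identifies the restriction with $S_w^0$ and shows $P_k^0(w)=P_k(w)|_{Z_0(u)}.$

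Finally the JP property transfers immediately. If $w_1,w_2$ are orthogonal geometric tripotents of $Z_0(u),$ then by the first step they are orthogonal geometric tripotents of $Z$ lying in the linear subspace $U_0(u);$ hence $w_1+w_2\in U_0(u),$ so $w_1+w_2\diamond u$ and its symmetry likewise restricts. Applying the JP identity of $Z$ and restricting to the invariant subspace $Z_0(u)$ gives $S_{w_1}^0S_{w_2}^0=(S_{w_1}S_{w_2})|_{Z_0(u)}=S_{w_1+w_2}|_{Z_0(u)}=S_{w_1+w_2}^0,$ which is exactly JP for $Z_0(u).$ The main obstacle is the middle step: verifying that the ambient symmetries genuinely descend to the symmetries of $Z_0(u),$ equivalently that $Z_0(u)$ is a neutral SFS space with geometric Peirce projections $P_k(w)|_{Z_0(u)};$ this is where the compatibility of the two Peirce decompositions for the orthogonal pair $u,w$ must be used, and once it is secured the conclusion is purely formal.
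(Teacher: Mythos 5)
Your overall architecture matches the paper's: identify the geometric tripotents of $Z_0(u)$ with geometric tripotents of $Z$ lying in $U_0(u)$, check that the associated Peirce projections and symmetries restrict, and then restrict the JP identity of $Z$. But two of your steps do not hold up. First, your appeal to Lemma~\ref{lm0}(iii) to get $P_1(u)P_1(w)=0$, and hence $Z_2(w)+Z_1(w)\subset Z_0(u)$, is illegitimate here: Lemma~\ref{lm0} is established only under the standing assumption of Section~3 that $Z$ is \emph{finite-dimensional} and real (its proof rests on every norm-exposed face being a simplex), whereas the present lemma is stated, proved and needed for an arbitrary neutral SFS space with JP. In that generality $P_1(u)P_1(w)=0$ is simply false: for the predual of a JBW*-triple of rank at least two, the joint Peirce projection $P_1(u)P_1(w)$ of two orthogonal tripotents is typically nonzero, so $Z_1(w)\not\subset Z_0(u)$. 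The invariance of $Z_0(u)$ under $P_1(w)$ that you actually need already follows from the commutation relations of \cite[Corollary 3.4]{fr2} alone, so this part is repairable, but not by the route you chose.

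Second, and more seriously, the step you yourself flag as ``the main obstacle'' is essentially the whole content of the lemma, and you do not carry it out. A geometric tripotent of the normed space $Z_0(u)$ is an element of $Z_0(u)^{*}$, not of $Z^{*}$; your polar-decomposition argument only produces support tripotents of individual elements of $Z_0(u)$, and your assertion that orthogonality is ``intrinsic'' begs the question, since orthogonality of tripotents is defined through the Peirce projections of the respective spaces, which you have not yet related. The paper's proof is devoted precisely to this: extend $u_1,u_2$ by Hahn--Banach, invoke \cite[Theorem 2.3]{fr2} to replace the extensions by geometric tripotents $v_1,v_2\in U_0(u)$ with the same faces, use \cite[Theorem 3.6 and Proposition 4.1]{fr2} to obtain $P_k(u_i)=P_k(v_i)|_{Z_0(u)}$ and $S_{u_i}=S_{v_i}|_{Z_0(u)}$, and then verify $v_1\diamond v_2$ by the explicit computation $P_2(v_1)^{*}v_2=P_1(v_1)^{*}v_2=0$ (which uses only $P_2(u_2)P_2(u_1)=0$ and $P_0(u)P_2(v_i)=P_2(v_i)$, not the false identity $P_1(v_1)P_1(v_2)=0$). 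Until these points are supplied, the final ``purely formal'' restriction of $S_{v_1}S_{v_2}=S_{v_1+v_2}$ has nothing to act on.
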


\begin{proof}
Let $u\in \mathcal{GT},$ then by \cite[Proposition 4.1]{fr2} the subspace
$Z_0(u)$ is a neutral SFS-space.
We denote by $\mathcal{GT}_{Z_0(u)}$ and $\mathcal{SF}_{Z_0(u)}$
the set of all geometric tripotents of $U_0(u)$ and symmetric faces in $Z_0(u).$

Let $u_1, u_2\in \mathcal{GT}_{Z_0(u)}$ and $u_1\diamond u_2.$
By the Hahn-Banach theorem, we extend the functionals
$u_1, u_2$ to $Z,$ which we denote by $x_1, x_2$, preserving the norm, respectively.
Then
\begin{eqnarray}\label{autoloc}
\|x_i\|=1, \,\,\, x_i(f)=u_i(f), \, \, \,  \forall f\in Z_0(u),
\,\,\, x_i|_{Z_2(u)+Z_1(u)}=0, \,\,\, i=1,2.
\end{eqnarray}
Thus, by  \cite[Theorem 2.3]{fr2}, there exist geometric
tripotents
 $v_i\in \mathcal{GT}\cap U_0(u)$ such that $F_{x_i}=F_{v_i},$ where $i=1,2.$
 Then by (1), we have
 \begin{eqnarray}\label{autoloc}
 F_{v_i}=F_{u_i}, \,\,\, v_i(f)=u_i(f) \,\,\, \forall f\in Z_0(u), \,\,\,
v_i|_{Z_2(u)+Z_1(u)}=0, \,\,\, i=1,2.
\end{eqnarray}
Further \cite[Theorem 3.6 and Proposition 4.1]{fr2}, imply that
\begin{eqnarray}\label{autoloc}
P_k(u_i):=P_k(v_i)|_{Z_0(u)}, \,\,\,  S_{u_i}:=S_{v_i}|_{Z_0(u)},
\end{eqnarray}
where $k=0,1,2,$ $i=1,2.$

Now we show that $v_1\diamond v_2$ and $S_{u_1+u_2}=S_{v_1+v_2}|_{Z_0(u)}.$
Since $u_1\diamond u_2,$ $v_i\in \mathcal{GT}\cap U_0(u),$ then by virtue of \cite [Theorem 3.6 and Corollary
3.4(a)]{fr2} we have $P_2(u_2)P_2(u_1)=P_2(u_1)P_2(u_2)=0$ and
$P_0(u)P_2(v_i)=P_2(v_i)P_0(u)=P_2(v_i),$ where $i=1,2.$
Therefore, by (2) and (3), for any $f\in Z$ we have
\begin{eqnarray*}
\langle P_2(v_1)^{\ast}v_2, f\rangle &=&\langle v_2, P_2(v_1)f\rangle=\langle u_2, P_2(v_1)f\rangle=\\
&=&\langle P_2(u_2)^{\ast}u_2, P_2(v_1)f\rangle=\langle u_2, P_2(u_2)P_2(v_1)P_0(u)f\rangle=\\
&=&\langle u_2, P_2(u_2)P_2(u_1)P_0(u)f\rangle=\langle u_2, 0\rangle=0.
\end{eqnarray*}
Then $P_2(v_1)^{\ast}v_2=0.$ Analogously, we get that $P_1(v_1)^{\ast}v_2=0.$
Hence, $P_0(v_1)^{\ast}v_2=v_2.$ Therefore, $v_1\diamond v_2.$
Then by (2) the geometric tripotent $v_1+v_2$ is an extension of $u_1+u_2$ and $F_{v_1+v_2}= F_{u_1+u_2}.$
Thus, by virtue of \cite[Theorem 3.6 and Proposition 4.1]{fr2}, we have
\begin{eqnarray}\label{autoloc}
S_{u_1+u_2}:=S_{v_1+v_2}|_{Z_0(u)}.
\end{eqnarray}
Since $Z$ has the property of JP, it follows from (3) and (4) that we have
\[S_{u_1+u_2}=S_{v_1+v_2}|_{Z_0(u)}=S_{v_1}|_{Z_0(u)}S_{v_2}|_{Z_0(u)}=S_{u_1}S_{u_2}.\]
Hence, $Z_0(u)$ has the property JP. The proof is complete.
\end{proof}

\textit{Proof of Theorem~\ref{leibnizsimple}.} In the case when
$dim Z=2$ and $dim Z=3,$  the statement of  theorem follows from \cite[Proposition 1 and Theorem 2]{is3}.

Suppose that the theorem is true when  $\mbox{dim}
Z<n.$  Let $\mbox{dim} Z=n$ and  $\mbox{rank} Z=k.$ Then by the definition of the rank of the space there exists
a maximal family of mutually orthogonal geometric tripotents $u_1, u_2, ..., u_k,$
and by  \cite[lemma 2.5]{fr1}  follows that  $u_1+u_2+...+u_k$ is a maximal geometric tripotent.
Then by lemma 3 it follows that
\begin{eqnarray}\label{autoloc}
Z=Z_0(u) \oplus Z_0(v), \,\,\, Z_0(u) \diamond Z_0(v),
\end{eqnarray}
where $u=u_1, v=u_2+...+u_k.$

By \cite[Proposition 4.1]{fr2} and Lemma 4 the subspaces $Z_0(u)$ and $Z_0(v)$
are neutral SFS-spaces satisfying JP with dimensions less that \(n.\) By the
assumption of induction these are direct sums of Hilbert spaces.
By  (5) it follows that  $Z$ is also a direct sum of Hilbert
spaces. The proof is complete.

Thus, unlike the case when $\mbox{rank} Z = 1,$  $n-1, n$ (in each
of these of cases, up to isometrically isomorphism, there is one
space), in the case $1< \mbox{rank} Z <n-1$ the number of
non-isomorphic spaces is at least  $[\frac{n}{k}],$ where
$k=\mbox{rank} Z,$ $[t]$ is the integer part of the number  $t.$
The number of non-isomorphic spaces can be calculated from the
recurrence relation
\begin{eqnarray*}
p(n)=\sum\limits_{m=1}^{\infty}(-1)^{m+1}\left(p\left(n-\frac{m(3m-1)}{2}\right)+p\left(n-\frac{m(3m+1)}{2}\right)\right).
\end{eqnarray*}

\section{Description of facially symmetric spaces with unitary tripotents}

A geometric
tripotent $e\in \mathcal{GT}$ is said to be unitary if the convex
hull of the set $F_e\cup F_{-e}$ coincides with the unit ball $Z_1,$ i.e.
\begin{eqnarray}\label{autoloc}
Z_1=\mathrm{co}(F_e\cup F_{-e}).
\end{eqnarray}
Also note that property (6) is much stronger than the Jordan decomposition
property of a norm exposed face (see ~\cite[Lemmata 2.3]{fr6}). Recall that a norm exposed face $F_u$
has the Jordan decomposition property if its real span coincides with the
geometric Peirce 2-space of the geometric tripotent $u.$

The space $\mathbb{R}^{n}$ with the norm $\|x\|=\sum\limits_{i=1}^{n}|x_i|,$ $x=(x_i)\in \mathbb{R}^{n}$ is a SFS space.
If $e\in \mathbb{R}^{n}\cong (\mathbb{R}^{n})^{\ast}$ is a maximal geometric tripotent then
 $e=(\varepsilon_1, ..., \varepsilon_n), \varepsilon_i\in \{-1, 1\}, i=\overline{1,n},$ and in this case the norm exposed face
\begin{eqnarray*}
F_e=\left\{x\in \mathbb{R}^{n}: \sum\limits_{i=1}^{n}\varepsilon_ix_i=1, \varepsilon_ix_i\geq 0, i=\overline{1,n} \right\}
\end{eqnarray*}
satisfies (6).

More generally, consider a measure space $(\Omega, \Sigma, \mu)$ with measure $\mu$ having
the direct sum property, i.e. there is a family $\{\Omega\}_{i\in J}\subset \Sigma, 0<\mu(\Omega_i)<\infty, i\in J,$
such that for any $A\in \Sigma$ with $\mu(\Omega_i)<\infty,$ there exist a countable
subset $J_0\subset J$ and a set $B$ of zero measure such that $A=\bigcup_{i\in J_0}(A\cap \Omega_i)\cup B.$

Let $L_1(\Omega, \Sigma, \mu)$ be the space of all real integrable functions on $(\Omega, \Sigma, \mu).$
The space $L_1(\Omega, \Sigma, \mu)$ with the norm $\|f\|=\int\limits_{\Omega}|f(t)|d\mu(t),$
$f\in L_1(\Omega, \Sigma, \mu),$ is a SFS space.
If $e\in L^{\infty}(\Omega, \Sigma, \mu)\cong L_1(\Omega, \Sigma, \mu)^{\ast},$ is a maximal geometric
tripotent then $e=\widetilde{\chi}_A-\widetilde{\chi}_{\Omega\setminus A}$ for some $A\in \Sigma,$ where $\widetilde{\chi}_A$
is the class containing the indicator function of the set $A\in \Sigma.$ Then the norm exposed face
\begin{eqnarray*}
F_e=\left\{f\in L_1(\Omega, \Sigma, \mu): \|f\|=1, \int\limits_{\Omega}e(t)f(t)d\mu(t)=1\right\}
\end{eqnarray*}
satisfies (6).

In \cite{yik}, Theorem 3.2 it was established that if $Z$ is a real neutral strongly
facially symmetric space with unitary tripotent such that any maximal tripotent is unitary, then $Z$ is isometrically isomorphic to $L_1$-space.

The following result shows that the condition "every maximal
triptotent is unitary" in the above theorem is superfluous.

\begin{lm}\label{filider}
Let $Z$ is  a real neutral
SFS-space with unitary tripotent. Then every
maximal tripotent is unitary.
\end{lm}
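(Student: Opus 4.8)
The plan is to reformulate unitarity in terms of an orthogonal decomposition along the positive cone of a tripotent, and then to transport the decomposition supplied by the given unitary tripotent $e$ to an arbitrary maximal tripotent $u$. First I would record the consequences of $e$ being unitary. Since $F_e\subset\overline{\mathrm{sp}}F_e=Z_2(e)$ and $F_{-e}=-F_e\subset Z_2(e)$, relation (6) gives $Z_1=\mathrm{co}(F_e\cup F_{-e})\subset Z_2(e)$, whence $Z=\overline{\mathrm{sp}}\,Z_1=Z_2(e)$ and $S_e=I$, $P_1(e)=P_0(e)=0$. For an arbitrary geometric tripotent $w$ put $C_w=\mathbb{R}_{\ge 0}F_w$. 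Because $\langle w,a\rangle=\|a\|$ for $a\in C_w$, one has $\|a+b\|=\langle w,a+b\rangle=\|a\|+\|b\|$ for all $a,b\in C_w$, so the norm is automatically additive on $C_w$. Consequently $w$ is unitary if and only if every $f\in Z$ can be written as $f=a-b$ with $a,b\in C_w$ and $a\diamond b$. Since $C_w\subset Z_2(w)$, such a splitting forces $f\in Z_2(w)$; hence unitarity of $w$ already requires $Z=Z_2(w)$.

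For a maximal tripotent $u$ we have $P_0(u)=0$, i.e. $Z=Z_2(u)+Z_1(u)$, so the proof splits into two tasks: (A) show $Z_1(u)=0$, and (B) given $Z=Z_2(u)$, produce for each $f$ the orthogonal splitting $f=a-b$, $a,b\in C_u$. For task (B) I would use the polar decomposition \cite[Theorem 4.3]{fr2}: writing the pieces of $f$ through their polar tripotents and the lattice $L_e$ attached to $e$, one checks that the two summands lie in $C_u$ and $C_{-u}$ and are orthogonal, so that $Z_1=\mathrm{co}(F_u\cup F_{-u})$.

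Task (A) I would attack by contradiction. A nonzero element of $Z_1(u)$ has, by \cite[Corollary 2.2]{fr4} (as in the proof of Lemma \ref{spi}), a polar tripotent $v\in\mathcal{GT}\cap U_1(u)$; applying the symmetry $S_u$ to (6) yields a second unitary tripotent $e'=S_u^{\ast}e$ with $e+e'\in U_2(u)$ and $e-e'\in U_1(u)$, and I would exploit $Z=Z_2(e)=Z_2(e')$ together with the Peirce calculus and orthogonality relations of \cite{fr1,fr2} to exclude the tripotent $v$, thereby forcing $Z_1(u)=0$.

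The main obstacle is task (A). Since this lemma assumes neither JP nor finite dimension, and since distinct maximal tripotents need not be comparable (already in $L_1(\Omega,\mu)$ two sign functions are incomparable), I cannot place $u$ and $e$ in a common lattice $L_e$, nor may I invoke the joint Peirce decomposition. The difficulty is therefore to transfer the split-face structure of $e$ to the incomparable tripotent $u$ and to rule out geometric tripotents in $U_1(u)$; I expect the decisive input to be the interplay of the two unitary tripotents $e$ and $e'=S_u^{\ast}e$ together with the neutrality of $P_2$.
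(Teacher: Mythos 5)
Your proposal is not a proof but a plan with an acknowledged hole exactly where the real difficulty sits. The reduction is reasonable: unitarity of $e$ forces $Z=Z_2(e)$, and for a maximal $u$ one must kill $Z_1(u)$ and then exhibit the convex decomposition of $Z_1$. But task (A) is the whole lemma, and your closing paragraph concedes you do not know how to carry it out; the suggestion to play $e$ against $e'=S_u^{\ast}e$ via Peirce calculus is a hope, not an argument. (A secondary quibble: your claimed equivalence ``$w$ unitary iff every $f$ splits as $a-b$ with $a,b\in C_w$ and $a\diamond b$'' is not immediate either — for $f$ of norm less than $1$, membership in $\mathrm{co}(F_w\cup F_{-w})$ does not by itself force the two summands to be orthogonal.)

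The idea you are missing, and the one the paper uses, is \cite[Lemma 3.4]{yik}: in a neutral SFS-space with unitary tripotent $e$, every maximal tripotent $u$ decomposes as $u=u_1-u_2$ with $u_1,u_2$ mutually orthogonal and \emph{both dominated by} $e$. Your observation that $u$ and $e$ are incomparable is therefore a red herring: one never compares $u$ with $e$; one places the two halves $u_1,u_2$ inside the complete lattice $L_e$ of \cite[Proposition 4.5]{fr2}. Since $u_1+u_2$ is again maximal (\cite[Lemma 3.5]{yik}) and $u_1+u_2\leq e$, maximality gives $e=u_1+u_2$. From this the paper reads off $F_{u_1}=F_u\cap F_e$, $F_{-u_2}=F_u\cap F_{-e}$, uses the unitarity relation (6) together with the face property to get $F_u=F_{u_1}\oplus_c F_{-u_2}$, and then obtains
$\mathrm{co}(F_u\cup F_{-u})=\mathrm{co}\bigl((F_{u_1}\oplus_c F_{-u_2})\cup(F_{-u_1}\oplus_c F_{u_2})\bigr)=\mathrm{co}(F_e\cup F_{-e})=Z_1$
by swapping signs on the two orthogonal summands; neutrality of $P_2(u_1-u_2)$ is what identifies $\overline{\mathrm{sp}}F_u$ with $Z$. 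Without the decomposition $u=u_1-u_2$ relative to $e$ (or some equivalent Jordan-decomposition input), your tasks (A) and (B) remain open, so the proposal has a genuine gap.
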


\begin{proof}   Let $e$ be a unitary geometric tripotent and $u$ is maximal geometric tripotent. Then
by \cite[Lemma 3.4]{yik}, there exist mutually orthogonal geometric tripotents $u_1, u_2\leq e$ such that  $u=u_1-u_2.$
Since $u$ is  a maximal tripotent, then by \cite[Lemma 3.5]{yik} we get  that $u_1+u_2$ is a maximal geometric tripotent.

On the other side, if  $u_1, u_2\leq e$
are mutually orthogonal geometric tripotents, then by \cite [Lemma 2.1]{fr1} we obtain that
$F_{u_1}, F_{u_2} \subset F_{u_1+u_2},$ consequently $u_1, u_2 \leq u_1+u_2.$
Since, $L_e=\{v\in \mathcal{GU}: v\leq e\}\cup \{0\}$
is a complete orthomodular lattice
(see ~\cite[Proposition 4.5]{fr2}), then $u_1+u_2\leq e.$
 Therefore, by the maximality of $u_1+u_2$ it follows  that $e=u_1+u_2.$ Then $F_{u_1}=F_u\cap F_e$ and $F_{-u_2}=F_u\cap F_{-e}.$

Now, we show that
\begin{eqnarray}\label{autoloc}
F_u=F_{u_1}\oplus_c F_{-u_2}.
\end{eqnarray}
For this it suffices to show that $F_u\subseteq F_{u_1}\oplus_c F_{-u_2}.$

Let $f\in F_u.$ Then, from equality $(6)$ we obtain $f=tg+(1-t)h$ where $g\in F_e,$ $h\in F_{-e},$ $0\leq t\leq 1.$

If $t=1,$ then $f=g,$ and therefore $f\in F_u\cap F_e=F_{u_1}.$

If $t=0,$ then $f=h,$ and therefore $f\in F_u\cap F_{-e}=F_{-u_2}.$

Now let $0<t<1.$ Since $F_u$ is a face, then $g, h\in F_u.$ Therefore $g\in F_u\cap F_e=F_{u_1},$
$h\in F_u\cap F_{-e}=F_{-u_2}.$ Thus, from $F_{u_1}\diamond F_{-u_2}$  we obtain $f\in F_{u_1}\oplus_c F_{-u_2}.$
Therefore,
\begin{eqnarray*}
F_u= F_{u_1}\oplus_c F_{-u_2}.
\end{eqnarray*}

Let $f\in F_{u_1+u_2}.$ Then $\|P_2(u_1-u_2)f\|\leq\|f\|$ and, in view of \cite [Lemma 4.2]{fr2} we obtain
\begin{eqnarray*}
\langle P_2(u_1-u_2)f, u_1+u_2\rangle=\langle f, P_2(u_1-u_2)^{\ast}(u_1+u_2)\rangle=\\
=\langle f, P_2(u_1-u_2)^{\ast}u_1\rangle+\langle f, P_2(u_1-u_2)^{\ast}u_2\rangle=\langle f, u_1+u_2\rangle=1
\end{eqnarray*}
Therefore, $\|P_2(u_1-u_2)f\|=\|f\|$ and from the neutrality of $P_2(u_1-u_2)f=f.$
Thus, from the equalities (6) and (7) we obtain
\begin{eqnarray*}
Z=\mathrm{\overline{sp}}F_e=\mathrm{\overline{sp}}F_u=\mathrm{\overline{sp}}(F_{u_1}\oplus_c F_{-u_2})
=\mathrm{\overline{sp}}F_{u_1}\oplus \mathrm{\overline{sp}}F_{-u_2}=\mathrm{\overline{sp}}F_{u_1}\oplus \mathrm{\overline{sp}}F_{u_2}
\end{eqnarray*}
Therefore, using equalities  (6) and (7) we obtain that
\begin{eqnarray*}
\mathrm{co}(F_u\cup F_{-u})=\mathrm{co}((F_{u_1}\oplus_c F_{-u_2})\cup (F_{-u_1}\oplus_c F_{u_2}))=\\
=\mathrm{co}((F_{u_1}\oplus_c F_{u_2})\cup (F_{-u_1}\oplus_c F_{-u_2}))=\mathrm{co}(F_e\cup F_{-e})=Z_1.
\end{eqnarray*}
This means that $u$ is unitary. The proof is
complete.
\end{proof}

Now we can formulate \cite [Theorem 3.1]{yik} as follows.

\begin{thm}\label{pure}
Let $Z$ be a real neutral strongly facially symmetric space with
unitary geometric tripotents. Then there exists a measure space
$(\Omega, \Sigma, \mu)$ with measure $\mu$ having the direct sum
property such that the space $Z$ is isometrically isomorphic to
the space  $L_1(\Omega, \Sigma, \mu).$
\end{thm}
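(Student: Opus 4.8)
The plan is to derive Theorem~\ref{pure} as an immediate consequence of Lemma~\ref{filider} combined with the conditional classification already available. Recall that \cite[Theorem 3.2]{yik} asserts that a real neutral strongly facially symmetric space which carries a unitary geometric tripotent \emph{and} in which every maximal geometric tripotent is unitary is isometrically isomorphic to some $L_1(\Omega,\Sigma,\mu)$, where $(\Omega,\Sigma,\mu)$ has the direct sum property. Thus the only gap between the hypotheses of Theorem~\ref{pure} and those of \cite[Theorem 3.2]{yik} is the extra requirement that \emph{all} maximal tripotents be unitary, and the whole point of the preceding lemma is to close this gap.

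Concretely, I would first invoke the standing hypothesis that $Z$ admits a unitary geometric tripotent, which is exactly the input needed to run Lemma~\ref{filider}. Applying that lemma then shows that every maximal geometric tripotent of $Z$ is automatically unitary. This is the substantive step, and it has already been carried out above: one fixes a unitary tripotent $e$ and an arbitrary maximal tripotent $u$, writes $u=u_1-u_2$ with $u_1,u_2\leq e$ mutually orthogonal via \cite[Lemma 3.4]{yik}, uses the completeness of the orthomodular lattice $L_e$ together with the maximality of $u_1+u_2$ to force $e=u_1+u_2$, and finally establishes the decomposition $F_u=F_{u_1}\oplus_c F_{-u_2}$, from which $\mathrm{co}(F_u\cup F_{-u})=\mathrm{co}(F_e\cup F_{-e})=Z_1$.

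With maximal tripotents now known to be unitary, the hypotheses of \cite[Theorem 3.2]{yik} hold verbatim, so I would simply apply that theorem to produce the measure space $(\Omega,\Sigma,\mu)$ with the direct sum property and the isometric isomorphism $Z\cong L_1(\Omega,\Sigma,\mu)$. The genuine difficulty of the statement therefore resides entirely in Lemma~\ref{filider}, namely the passage from a single unitary tripotent to the universal unitarity of all maximal tripotents; once that reduction is secured the theorem itself requires no further argument, which is precisely why it is presented as a reformulation of the cited result with the superfluous hypothesis removed.
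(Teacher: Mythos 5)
Your proposal is correct and follows exactly the paper's intended argument: the paper gives no separate proof of Theorem~\ref{pure} precisely because, once Lemma~\ref{filider} removes the hypothesis that every maximal tripotent is unitary, the statement is just \cite[Theorem 3.2]{yik} applied verbatim. No gaps.
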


\end{document}